\begin{document}
\title[Solvability of a sixth-order boundary value problem]{ Solvability of a sixth-order boundary value problem with multi-point and multi-term integral boundary conditions}
\author[N. Houari]{Nourredine Houari}
\address{
Laboratory of Fundamental and Applied Mathematics of Oran (LMFAO), Department of Mathematics, University of Oran 1, Oran, Algeria.}
\author[F. Haddouchi]{Faouzi Haddouchi}
\address{
Department of Physics, University of Sciences and Technology of
Oran-MB, Oran, Algeria.
\newline
And
\newline
Laboratory of Fundamental and Applied Mathematics of Oran (LMFAO), University of Oran 1, Oran,  Algeria.}
\email{noureddinehouari.mi@gmail.com}
\email{fhaddouchi@gmail.com}
\subjclass[2010]{34B15, 34B18}
\keywords{Existence, uniqueness, Rus's theorem, Krasnoselskii-Zabreiko fixed point theorem, nontrivial solution, sixth-order boundary value problem}

\maketitle \numberwithin{equation}{section}
\newtheorem{theorem}{Theorem}[section]
\newtheorem{lemma}[theorem]{Lemma}
\newtheorem{definition}[theorem]{Definition}
\newtheorem{proposition}[theorem]{Proposition}
\newtheorem{corollary}[theorem]{Corollary}
\newtheorem{remark}[theorem]{Remark}
\newtheorem{exmp}{Example}[section]
\begin{abstract}
 This paper aims to investigate the existence and uniqueness of solutions for a sixth order differential equation involving nonlocal and integral boundary conditions. Firstly, we obtain the properties of the relevant Green's functions. The existence result of at least one nontrivial solution is obtained by applying the Krasnoselskii-Zabreiko fixed point theorem. Moreover, we also establish the existence of unique solution to the considered problem via H\"{o}lder and Minkowski inequalities and Rus's theorem. Finally, two examples are included to show the applicability of our main results.
\end{abstract}
\section{Introduction}
Differential equations have been studied extensively in various fields of the science and engineering, see the books \cite{Book1,Book5,Book3,Book4,Book2}. In recent years, the existence and uniqueness of solutions of boundary value problems with various multipoint and integral boundary conditions has been studied broadly by many researchers using a variety of approaches, tools and techniques and obtained many meaningful results.  See \cite{Akram,Ballem,Ben,Bon1,Bon2,Che,Guen,Hadd4,Hadd5,Hadd1,Hadd3,Hadd2,Li2,Li1,Liu,Mart,Odda1,Odda2,Panos,Sidiq1,Sidiq2} and references cited therein.

In particular, for sixth-order problems, Li \cite{Li2} studied, by using spectral theory of operators and fixed point theorem in cone, the existence and multiplicity of positive solutions for the boundary value problem consisting of the equation
\begin{equation*}
	u^{(6)} + A(t)u^{(4)} + B(t)u^{(2)} + C(t)u + f(t,u) = 0,
\end{equation*}
with the boundary conditions\\
\begin{equation*}
	u(0) = u(1) = u''(0)= u''(1) = u^{(4)}(0) = u^{(4)}(1) = 0,
\end{equation*}
where $A(t), B(t), C(t) \in C([0,1])$ and $f(t,u)\in C([0,1]\times [0,\infty),[0,\infty))$.

In \cite{bek1}, Bekri and Benaicha interested, via Leray-Schauder nonlinear alternative, in the existence of a nontrivial solution for the nonlinear boundary value problem
\begin{equation*}
	-u^{(6)}(t) + f(t,u(t),u''(t)) = 0,\ 0<t<1,
\end{equation*}
\begin{equation*}
	u(0)=u'(0)=u''(0)=0,\ u'''(1)=u^{(4)}(1)=u^{(5)}(1)=0,
\end{equation*}
where $f\in C([0,1]\times\mathbb{R}^2,\mathbb{R}).$

In 2019, Yang \cite{yang} obtained, by means of Krasnoselskii fixed point theorem, sufficient conditions for the existence and nonexistence of positive solutions for the following sixth order boundary value problem
\begin{equation*}
	u^{(6)}(t) + g(t)f(u(t)) = 0,\ 0\leq t\leq 1,
\end{equation*}
\begin{equation*}
	u(0)=u'(0)=u''(0)=0,\ u'(1)=u'''(1)=u^{(5)}(1)=0,
\end{equation*}
where $f\in C([0,\infty),[0,\infty))$ and $g:[0,1]\rightarrow [0,\infty)$ is continuous with $g(t)\neq 0$ on $[0,1]$.

A year later, the authors \cite{Mart} used Avery Peterson's theorem to investigate the existence of multiple solutions and the existence and uniqueness via a new numerical method based on Banach contraction principle for the following nonlinear sixth order boundary value problem
\begin{equation*}
	u^{(6)}+f(t,u)=0,\ 0<t<1,
\end{equation*}
\begin{equation*}
	u(0) = u'(0) = u''(0) = 0, u'(1) = u'''(1) = u^{(5)}(1) = 0,
\end{equation*}
where $f\in C(\mathbb{R}^2, \mathbb{R}).$

Very recently, Bonnano et al.\cite{Bon1} used critical point theory to discuss the existence of at least one nontrivial solution for the nonlinear sixth order boundary value problem
\begin{equation*}
	-u^{(6)} + Au^{(4)} - Bu''+ Cu = \lambda f(x,u),\ x\in[a,b],
\end{equation*}
\begin{equation*}
	u(a) = u(b) = u''(a) = u''(b) = u^{(4)}(a) = u^{(4)}(b) = 0,
\end{equation*}
where, $\lambda>0, A, B$ and $C$ are constants and $f :[a,b]\times\mathbb{R}\rightarrow\mathbb{R}$ is a function.

In this paper, motivated by the above works, we investigate the following sixth-order boundary value problem with integral multi-point boundary condition
\begin{equation}\label{eq001}
u^{(6)}(t) + f(t,u(t)) = 0,\ t\in (0,1),
\end{equation}
\begin{equation}\label{eq002}
 u^{(j)}(0)= u''(1)=0,\ 1\leq j\leq 4 ,\
 u(0)= \sum_{i=1}^{m-1}\alpha_i\int^{\eta_{i+1}}_{\eta_i} u(s)ds + \sum_{i=1}^{m}\beta_iu(\eta_i),
\end{equation}
 where
 \begin{itemize}
\item[(H1)] $ f \in C(J\times\mathbb{R}, \mathbb{R})$ with $J= [0,1]$;
\item[(H2)] $ \alpha_i,\beta_i\in \mathbb{R}$,\ and $ 0<\eta_1<...<\eta_m\leq 1,\ m\geq 2$.
 \end{itemize}
First, we establish some properties of Green's functions, and then we show the existence of at least one nontrivial solution of the nonlinear problem \eqref{eq001}-\eqref{eq002} via Krasnoselskii-Zabreiko's fixed point theorem. To provide the uniqueness result, we use two metrics under Rus's fixed point theorem.

\section{Preliminaries}
\begin{definition}
An operator $A: X \rightarrow X$ is completely continuous if it is continuous and maps bounded sets into relatively compact sets.
\end{definition}
The H\"{o}lder and Minkowski inequalities are key results in our discussion in Section 3.\\
\begin{theorem}(H\"{o}lder inequality)
Let $p>1$ and $q>1$ be conjugate exponents (i.e., $\frac{1}{p} + \frac{1}{q} = 1$), $f\in L^{p}(J)$ and $g\in L^{q}(J)$. Then $fg\in L^{1}(J)$ and $\|fg\|_{1}\leq\|f\|_{p}\|g\|_{q}$.
Further, if $f \in L^{1}(J)$ and $g\in L^{\infty}(J)$. Then $fg\in L^{1}(J)$ and $\|fg\|_{1}\leq\|f\|_{1}\|g\|_{\infty}$.
\end{theorem}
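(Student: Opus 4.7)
The plan is to prove both parts of the statement in the standard way, separating the $L^p$--$L^q$ case from the easier $L^1$--$L^\infty$ case. For the main inequality with $1<p,q<\infty$, the central analytic tool is Young's inequality: for nonnegative reals $a,b$ and conjugate exponents $p,q$ one has
\[
ab \le \frac{a^{p}}{p} + \frac{b^{q}}{q}.
\]
I would first record this as a preliminary lemma; it follows from the concavity of the logarithm (writing $ab = \exp(\tfrac{1}{p}\log a^{p} + \tfrac{1}{q}\log b^{q})$ and applying Jensen), or equivalently from the convexity of the exponential function. This is a routine step but it is the one genuine estimate in the proof; everything else is bookkeeping.

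Next I would handle the trivial cases and then normalize. If $\|f\|_{p}=0$ or $\|g\|_{q}=0$, then $f$ or $g$ vanishes almost everywhere and the inequality is immediate. Otherwise, set
\[
F(t) = \frac{|f(t)|}{\|f\|_{p}}, \qquad G(t) = \frac{|g(t)|}{\|g\|_{q}},
\]
so that $\|F\|_{p}=\|G\|_{q}=1$. Applying Young's inequality pointwise to $a=F(t)$, $b=G(t)$ gives
\[
F(t)G(t) \le \frac{F(t)^{p}}{p} + \frac{G(t)^{q}}{q}.
\]
Integrating over $J$ and using $\int_{J} F^{p}=\int_{J} G^{q}=1$ yields $\int_{J} FG \le \tfrac{1}{p}+\tfrac{1}{q}=1$. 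In particular $FG\in L^{1}(J)$, so $fg\in L^{1}(J)$, and multiplying through by $\|f\|_{p}\|g\|_{q}$ produces $\|fg\|_{1}\le\|f\|_{p}\|g\|_{q}$.

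For the second assertion, assume $f\in L^{1}(J)$ and $g\in L^{\infty}(J)$. By definition of the essential supremum, $|g(t)|\le \|g\|_{\infty}$ for almost every $t\in J$, hence $|f(t)g(t)|\le \|g\|_{\infty}|f(t)|$ a.e. Since the right-hand side is integrable, so is $fg$, and integrating yields $\|fg\|_{1}\le\|f\|_{1}\|g\|_{\infty}$; this part requires no Young-type inequality at all.

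The only genuine obstacle is the Young inequality step; once it is in hand, the rest is a normalization argument and a direct estimate. Since the statement is classical and the ambient measure space is the bounded interval $J=[0,1]$, no additional care (such as $\sigma$-finiteness or approximation by simple functions) is needed beyond citing standard integration theory.
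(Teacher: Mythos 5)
Your proof is correct and is the standard textbook argument (Young's inequality via concavity of the logarithm, reduction to the normalized case $\|F\|_p=\|G\|_q=1$, and the direct essential-supremum bound for the $L^1$--$L^\infty$ case). The paper itself states this H\"{o}lder inequality as a classical preliminary without giving any proof, so there is nothing to compare against; your write-up supplies a complete and correct justification of exactly what the paper takes for granted.
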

\begin{theorem}(Minkowski's inequality)
Assume that $1\leq p<\infty$. If $f,g\in L^{p}(J)$, then $f+g\in L^{p}(J)$ and we have the triangle inequality $\|f+g\|_{p}\leq\|f\|_{p}+\|g\|_{p}$.
\end{theorem}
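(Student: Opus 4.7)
The plan is to deduce Minkowski's inequality from Hölder's inequality (Theorem 2.2), which is already available. Since the $p=1$ case follows at once from the pointwise triangle inequality $|f+g|\leq |f|+|g|$ and monotonicity of the integral, I would treat only $p>1$.

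First I would verify that $f+g\in L^{p}(J)$ so that $\|f+g\|_{p}$ is a finite quantity we are allowed to divide by. The quickest way is the elementary convexity bound $|f+g|^{p}\leq 2^{p-1}(|f|^{p}+|g|^{p})$, which integrates immediately using $f,g\in L^{p}(J)$. This step is routine but logically important; if $\|f+g\|_{p}=0$ the inequality is trivial, so from here on I assume $\|f+g\|_{p}>0$.

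The core of the proof is the pointwise factorization
\begin{equation*}
|f+g|^{p}=|f+g|\cdot|f+g|^{p-1}\leq |f|\cdot|f+g|^{p-1}+|g|\cdot|f+g|^{p-1}.
\end{equation*}
Integrating over $J$ and applying Hölder's inequality to each term with the conjugate exponent $q=p/(p-1)$ gives
\begin{equation*}
\int_{J}|f|\,|f+g|^{p-1}\,dt\leq \|f\|_{p}\,\|f+g\|_{p}^{\,p-1},
\end{equation*}
and similarly for the term involving $g$, where I use that $(p-1)q=p$ so that $\bigl(\int|f+g|^{(p-1)q}\bigr)^{1/q}=\|f+g\|_{p}^{\,p-1}$. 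Adding the two estimates yields $\|f+g\|_{p}^{\,p}\leq(\|f\|_{p}+\|g\|_{p})\,\|f+g\|_{p}^{\,p-1}$, and dividing by $\|f+g\|_{p}^{\,p-1}$ (which is legitimate by the previous paragraph) gives the desired inequality.

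The only real subtlety, and the step I would be most careful about, is the division at the end: it requires both the finiteness and the positivity of $\|f+g\|_{p}^{\,p-1}$, which is exactly why the preliminary $L^{p}$-containment step and the exclusion of the trivial case $\|f+g\|_{p}=0$ are essential. Everything else is a straightforward packaging of Hölder's inequality.
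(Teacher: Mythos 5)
Your proof is correct: the reduction to $p>1$, the preliminary containment $f+g\in L^{p}(J)$ via $|f+g|^{p}\leq 2^{p-1}(|f|^{p}+|g|^{p})$, the split $|f+g|^{p}\leq(|f|+|g|)\,|f+g|^{p-1}$ followed by H\"{o}lder with $q=p/(p-1)$, and the careful justification of the final division all check out. The paper states Minkowski's inequality as a known classical tool and offers no proof of its own, so there is nothing to compare against; your argument is the standard derivation from H\"{o}lder's inequality and is complete as written.
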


To prove our main results, we need to present the main tools to be used.
\begin{theorem}(Rus's Theorem) \label{Thm Rus} \cite{Rus}
Let $X$ be a nonempty set and let $d$ and $\delta$ be two metrics on $X$ such that ($X,d$) forms a complete metric space. If the mapping $A : X \rightarrow X$ is continuous with respect to $d$ on $X$ and \
\begin{eqnarray*}
d(Au_1,Au_2) \leq \xi \delta(u_1,u_2),
\end{eqnarray*}
for some $\xi>0$.\
And for all $u_1,u_2\in X$
\begin{eqnarray*}
\delta(Au_1,Au_2) \leq \nu \delta(u_1,u_2),
\end{eqnarray*}
for some $0<\nu<1$, then, there exists a unique $u^{*}\in X$ such that $Au^{*}=u^{*}.$
\end{theorem}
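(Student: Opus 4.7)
The plan is the standard Picard-iteration argument, adapted to two metrics. Pick an arbitrary $u_0\in X$ and define $u_{n+1}=Au_n$ for $n\geq 0$. First I would exploit the $\delta$-contraction to control successive $\delta$-distances: iterating $\delta(Au_1,Au_2)\le\nu\,\delta(u_1,u_2)$ gives
\begin{equation*}
\delta(u_{n+1},u_n)=\delta(Au_n,Au_{n-1})\le\nu\,\delta(u_n,u_{n-1})\le\nu^{n}\delta(u_1,u_0).
\end{equation*}
This chain of estimates is purely formal and poses no difficulty.

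Next I would transfer the decay from $\delta$ to $d$ using the hypothesis $d(Au_1,Au_2)\le\xi\,\delta(u_1,u_2)$. For $n\ge 1$,
\begin{equation*}
d(u_{n+1},u_n)=d(Au_n,Au_{n-1})\le\xi\,\delta(u_n,u_{n-1})\le\xi\,\nu^{n-1}\,\delta(u_1,u_0).
\end{equation*}
Then for any $m>n\ge 1$ the triangle inequality in $(X,d)$ yields
\begin{equation*}
d(u_m,u_n)\le\sum_{k=n}^{m-1}d(u_{k+1},u_k)\le\xi\,\delta(u_1,u_0)\sum_{k=n}^{m-1}\nu^{k-1},
\end{equation*}
and since $0<\nu<1$ the geometric tail tends to $0$, showing that $(u_n)$ is a $d$-Cauchy sequence. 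Completeness of $(X,d)$ produces a limit $u^*\in X$ with $d(u_n,u^*)\to 0$.

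To conclude that $u^*$ is a fixed point, I would invoke the $d$-continuity of $A$: from $u_n\to u^*$ in $d$ we get $Au_n\to Au^*$ in $d$, while $Au_n=u_{n+1}\to u^*$, hence $Au^*=u^*$. For uniqueness, if $v,w\in X$ both satisfy $Av=v$, $Aw=w$, then
\begin{equation*}
\delta(v,w)=\delta(Av,Aw)\le\nu\,\delta(v,w),
\end{equation*}
so $(1-\nu)\delta(v,w)\le 0$, forcing $\delta(v,w)=0$ and therefore $v=w$.

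The only genuine subtlety is that we neither assume $(X,\delta)$ to be complete nor $A$ to be $\delta$-continuous: the $\delta$-contraction is used solely as a quantitative decay mechanism that, once coupled with $d(Au_1,Au_2)\le\xi\,\delta(u_1,u_2)$, drives the Cauchy property in the \emph{right} metric $d$. The delicate point to verify while writing the proof is therefore this bridging step, together with the observation that $A$ need not preserve $\delta$-Cauchy behaviour itself; everything must be pushed through $d$ so that completeness and continuity are applied in the metric in which they are assumed.
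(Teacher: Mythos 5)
Your argument is correct and complete: the $\delta$-contraction gives geometric decay of $\delta(u_{n+1},u_n)$, the bridging inequality $d(Au_1,Au_2)\leq \xi\,\delta(u_1,u_2)$ transfers that decay to $d$, completeness of $(X,d)$ and $d$-continuity of $A$ yield the fixed point, and the $\delta$-contraction alone forces uniqueness. Note that the paper itself offers no proof of this statement --- it is quoted as a known result from Rus's 1977 article (a Maia-type theorem) --- so there is nothing to compare against; what you have written is the standard proof of that result, and your closing remark correctly identifies the one genuine subtlety, namely that completeness and continuity are available only for $d$ while the contraction is only in $\delta$.
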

\begin{theorem}(Krasnoselskii-Zabreiko fixed point Theorem) \label{Thm Kras-Zabr} \cite{Kras}
Let $(X,\lVert . \rVert)$ be a Banach space, and $A : X \rightarrow X$ be a completely continuous operator. Assume that  $L : X \rightarrow X$ is a bounded linear operator such that 1 is not an eigenvalue of $L$ and
\begin{equation} \label{eqZK}
\lim_{\lVert u\rVert \rightarrow \infty}\frac{\lVert Au-Lu\rVert}{\lVert u\rVert} =0.
\end{equation}
Then $A$ has a fixed point in $X$.	
\end{theorem}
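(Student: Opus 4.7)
The plan is to establish the fixed point via topological degree theory, specifically the Leray--Schauder degree. Since $A$ is completely continuous and, under the standard reading of this theorem, the linear operator $L$ is taken to be compact (so that $I-L$ is Fredholm of index zero), both $I-A$ and $I-L$ are admissible vector fields for the Leray--Schauder degree on any open ball centred at $0$.

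First I would introduce the affine homotopy $H(t,u) = (1-t)Lu + tAu$ for $t\in[0,1]$, observe that $H(t,\cdot)$ is completely continuous in $u$ and continuous in $t$, and aim to prove that $\deg(I-H(t,\cdot),B_R,0)$ is well defined and constant on $[0,1]$ for some sufficiently large $R$. The central step is to exhibit an $R>0$ such that no $u$ with $\lVert u\rVert = R$ solves $u=H(t,u)$ for any $t\in[0,1]$. Rearranging the fixed point equation gives $(I-L)u = t(Au-Lu)$. Because $1$ is not an eigenvalue of the compact operator $L$, Fredholm's alternative yields a bounded inverse $(I-L)^{-1}$, so
\begin{equation*}
\lVert u\rVert \leq \lVert (I-L)^{-1}\rVert \cdot \lVert Au-Lu\rVert .
\end{equation*}
Applying the hypothesis \eqref{eqZK} with $\varepsilon = \tfrac{1}{2}\lVert (I-L)^{-1}\rVert^{-1}$ furnishes some $R_{0}$ such that $\lVert Au-Lu\rVert \leq \varepsilon\lVert u\rVert$ whenever $\lVert u\rVert \geq R_{0}$; taking $R\geq R_{0}$ collapses the estimate to $\lVert u\rVert \leq \tfrac{1}{2}\lVert u\rVert$, which is impossible for $u\neq 0$.

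Having ruled out zeros of $I-H(t,\cdot)$ on $\partial B_R$, homotopy invariance gives $\deg(I-A,B_R,0)=\deg(I-L,B_R,0)$. Since $I-L$ is a linear isomorphism, its Leray--Schauder degree equals $(-1)^{\beta}$ where $\beta$ is the sum of algebraic multiplicities of the eigenvalues of $L$ lying in $(1,\infty)$; in particular this degree is nonzero. Consequently $\deg(I-A,B_R,0)\neq 0$, and the existence property of the degree produces a fixed point of $A$ inside $B_R$.

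The main obstacle is the bounded invertibility of $I-L$: the bare assumption that $1$ is not an eigenvalue is not enough for an arbitrary bounded linear operator, since $1$ could still sit in the continuous or residual spectrum. Thus the proof implicitly rests on compactness of $L$ (or of $A-L$), which is the standard hypothesis permitting the appeal to Fredholm theory. The other ingredients — continuity of the homotopy in $t$, complete continuity of $H(t,\cdot)$, and the final application of the existence axiom of the degree — are routine once this spectral point is secured.
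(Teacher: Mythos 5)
The paper offers no proof of this statement: it is quoted as a known result from Krasnosel'skii--Zabreiko \cite{Kras}, so there is no in-paper argument to compare against. Your degree-theoretic proof is the standard one and is essentially correct: the affine homotopy $H(t,u)=(1-t)Lu+tAu$, the a priori bound on $\partial B_R$ obtained by rewriting $u=H(t,u)$ as $(I-L)u=t(Au-Lu)$ and invoking \eqref{eqZK} with $\varepsilon=\tfrac12\lVert (I-L)^{-1}\rVert^{-1}$, and the index computation $\deg(I-L,B_R,0)=(-1)^{\beta}\neq 0$ are exactly the ingredients of the classical argument. The one point you flag as a possible gap --- that bounded invertibility of $I-L$ needs $L$ compact, since for a merely bounded operator $1$ could lie in the continuous or residual spectrum --- can actually be closed from the stated hypotheses rather than by adding an assumption: condition \eqref{eqZK} says precisely that $L$ is the asymptotic derivative of the completely continuous operator $A$, and a standard lemma of Krasnosel'skii asserts that the asymptotic derivative of a completely continuous operator is itself completely continuous (for a bounded set $B$, approximate $L(B)$ by $\lambda^{-1}A(\lambda B)$, which is relatively compact, with error controlled uniformly by \eqref{eqZK} as $\lambda\to\infty$). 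Once that observation is in place the Riesz--Schauder theory applies, $(I-L)^{-1}$ is bounded, and your argument goes through without modification.
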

First of all, we consider the linear equation
\begin{equation}\label{eq003}
u^{(6)}(t) + h(t)= 0, \ t \in (0,1),
\end{equation}
subject to the boundary conditions \eqref{eq002}.\\

For convenience, we denote $ \mu = 1-\big(\sum_{i=1}^{m-1}\alpha_i(\eta_{i+1}-\eta_i) + \sum_{i=1}^{m}\beta_i\big)$.\\
Next, we will present the following auxiliary results.

\begin{lemma} \label{lem1}
Assume that $\mu \neq 0$ and $h \in C(J,\mathbb{R})$, then the unique solution $u$ of the boundary value problem \eqref{eq003}-\eqref{eq002} is given by
 \begin{equation} \label{eqsol}
 u(t)= \int_0^1\bigg[G(t,s) + \frac{1}{\mu}\sum_{i=1}^{m-1}\alpha_i\Big( K(\eta_{i+1},s) - K(\eta_i,s)\Big)+\frac{1}{\mu}\sum_{i=1}^{m}\beta_iG(\eta_i,s) \bigg]h(s)ds,\ \forall t\in J,
 \end{equation}
 where
\begin{equation} \label{eq004}
 G(t,s) = \frac{1}{5!}
 \begin{cases}
 t^5(1-s)^3-(t-s)^5,& 0\leq s\leq t\leq 1 ,\\
 t^5(1-s)^3,& 0\leq t\leq s\leq 1,\\
 \end{cases}
\end{equation}
\begin{equation} \label{eq004(2)}
K(t,s) = \frac{1}{6!}
\begin{cases}
t^6(1-s)^3-(t-s)^6,& 0\leq s\leq t\leq 1 ,\\
t^6(1-s)^3,& 0\leq t\leq s\leq 1,\\
\end{cases}
\end{equation}
\end{lemma}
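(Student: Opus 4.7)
The plan is to reduce the sixth-order equation to a sequence of integrations and then resolve the resulting constants through the six boundary conditions, the last of which is nonlocal and will produce the $1/\mu$ factor.

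First I would integrate $u^{(6)}(t)=-h(t)$ six times on $[0,t]$ to obtain
\begin{equation*}
u(t) = -\int_0^t \frac{(t-s)^5}{5!}\,h(s)\,ds + c_0 + c_1 t + \tfrac{c_2}{2!}t^2 + \tfrac{c_3}{3!}t^3 + \tfrac{c_4}{4!}t^4 + \tfrac{c_5}{5!}t^5.
\end{equation*}
Evaluating the derivatives at $0$, the four conditions $u^{(j)}(0)=0$ for $1\le j\le 4$ immediately give $c_1=c_2=c_3=c_4=0$. Next I would differentiate twice and impose $u''(1)=0$, which determines $c_5 = \int_0^1 (1-s)^3 h(s)\,ds$. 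Substituting back and rearranging, the expression collapses to
\begin{equation*}
u(t) = \int_0^1 G(t,s)\, h(s)\,ds + c_0,
\end{equation*}
with $G(t,s)$ exactly the piecewise kernel in \eqref{eq004}, and $c_0=u(0)$ still free.

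The key step is then to exploit the observation that $K(t,s)$ is an antiderivative of $G(t,s)$ in the variable $t$; a direct differentiation of \eqref{eq004(2)} on each of the two pieces $s\le t$ and $t\le s$ shows $\partial_t K(t,s)=G(t,s)$, so by Fubini
\begin{equation*}
\int_{\eta_i}^{\eta_{i+1}} u(\tau)\,d\tau
= \int_0^1 \bigl[K(\eta_{i+1},s)-K(\eta_i,s)\bigr] h(s)\,ds + c_0(\eta_{i+1}-\eta_i).
\end{equation*}
Using this together with $u(\eta_i)=\int_0^1 G(\eta_i,s)h(s)\,ds+c_0$ in the nonlocal condition in \eqref{eq002} yields
\begin{equation*}
c_0 = \int_0^1 \Bigl[\sum_{i=1}^{m-1}\alpha_i(K(\eta_{i+1},s)-K(\eta_i,s)) + \sum_{i=1}^m \beta_i G(\eta_i,s)\Bigr] h(s)\,ds + c_0\Bigl(1-\mu\Bigr),
\end{equation*}
so that $\mu\neq 0$ allows me to solve explicitly for $c_0$ and plug it into the formula above, producing \eqref{eqsol}. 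Conversely I would briefly note that differentiating \eqref{eqsol} six times recovers the ODE and that all six boundary conditions are satisfied, yielding uniqueness.

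I expect the only real obstacle is bookkeeping: keeping the piecewise definitions of $G$ and $K$ straight when verifying $\partial_t K=G$ and when exchanging the order of integration over $[\eta_i,\eta_{i+1}]\times[0,1]$. Everything else is elementary integration and linear algebra in a single unknown $c_0$, with $\mu\neq 0$ being precisely the non-degeneracy condition that makes the auxiliary problem well-posed.
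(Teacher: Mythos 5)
Your proposal is correct and follows essentially the same route as the paper: integrate six times, use $u^{(j)}(0)=0$ ($1\le j\le 4$) and $u''(1)=0$ to fix all constants except $u(0)$, then resolve the remaining constant from the nonlocal condition using $\mu\neq 0$, with the identity $\int_{\eta_i}^{\eta_{i+1}}G(\tau,s)\,d\tau=K(\eta_{i+1},s)-K(\eta_i,s)$ (i.e.\ $\partial_t K=G$) producing the $K$-terms. The only cosmetic difference is that you isolate $\partial_t K=G$ as an explicit observation while the paper carries out the $\tau$-integration of $G$ directly.
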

\begin{proof}
In fact, if $u(t)$ is a solution of the boundary value problem \eqref{eq003}-\eqref{eq002}, then we obtain
\begin{equation} \label{eq005}
u(t) = -\frac{1}{5!}\int_{0}^{t}(t-s)^5h(s)ds+\frac{\kappa_1}{5!}t^5+\frac{\kappa_2}{4!}t^4+\frac{\kappa_3}{6}t^3+\frac{\kappa_4}{2}t^2+\kappa_5t+\kappa_6,
\end{equation}
where $\kappa_i \in\mathbb{R},\ i\in\{1,\ldots,5\}$. By the boundary conditions \eqref{eq002}, we get
\begin{eqnarray*}
\kappa_2=\kappa_3=\kappa_4=\kappa_5=0,\  \kappa_1=\int_{0}^{1}(1-s)^3h(s)ds.
\end{eqnarray*}
Further
\begin{eqnarray*}
\kappa_6&=& u(0)\\
&=&\sum_{i=1}^{m-1}\alpha_i\int^{\eta_{i+1}}_{\eta_i} u(\tau)d\tau + \sum_{i=1}^{m}\beta_iu(\eta_i)\\
&=& \sum_{i=1}^{m-1}\alpha_i\int^{\eta_{i+1}}_{\eta_i} \bigg[-\frac{1}{5!}\int_{0}^{\tau}(\tau -s)^5 h(s)ds +\frac{\tau^5}{5!}\int_{0}^{1}(1-s)^3h(s)ds + \kappa_6\bigg]d\tau\\
&&+
\sum_{i=1}^{m}\beta_i\bigg[-\frac{1}{5!}\int_{0}^{\eta_i}(\eta_i-s)^5 h(s)ds +\frac{\eta_i^5}{5!}\int_{0}^{1}(1-s)^3h(s)ds + \kappa_6\bigg],
\end{eqnarray*}
and thus, we get
\begin{eqnarray*}
\kappa_6&=& \frac{1}{\mu}\sum_{i=1}^{m-1}\alpha_i\int^{\eta_{i+1}}_{\eta_i} \bigg[-\frac{1}{5!}\int_{0}^{\tau}(\tau -s)^5 h(s)ds +\frac{\tau^5}{5!}\int_{0}^{1}(1-s)^3h(s)ds\bigg]d\tau\\
&&+
\frac{1}{\mu}\sum_{i=1}^{m}\beta_i\bigg[-\frac{1}{5!}\int_{0}^{\eta_i}(\eta_i-s)^5 h(s)ds +\frac{\eta_i^5}{5!}\int_{0}^{1}(1-s)^3h(s)ds\bigg]\\
&=& \frac{1}{\mu}\sum_{i=1}^{m-1}\alpha_i\int^{\eta_{i+1}}_{\eta_i} \bigg[\frac{1}{5!}\int_{0}^{\tau}[\tau^5(1-s)^3-(\tau-s)^5]h(s)ds+\frac{1}{5!}\int_{\tau}^{1}\tau^5(1-s)^3h(s)ds\bigg]d\tau\\
&&+
\frac{1}{\mu}\sum_{i=1}^{m}\beta_i \bigg[\frac{1}{5!}\int_{0}^{\eta_i}[\eta_i^5(1-s)^3-(\eta_i-s)^5]h(s)ds+\frac{1}{5!}\int_{\eta_i}^{1}\eta_i^5(1-s)^3h(s)ds\bigg].
\end{eqnarray*}
By inserting the value of  $\kappa_1$ and $\kappa_6$ in \eqref{eq005}, we obtain
\begin{eqnarray*}
u(t) &=& -\frac{1}{5!}\int_{0}^{t}(t-s)^5h(s)ds+ \frac{t^5}{5!}\int_{0}^{1}(1-s)^3h(s)ds\\
&&+
\frac{1}{\mu}\sum_{i=1}^{m-1}\alpha_i\int^{\eta_{i+1}}_{\eta_i} \bigg[\frac{1}{5!}\int_{0}^{\tau}[\tau^5(1-s)^3-(\tau-s)^5]h(s)ds+\frac{1}{5!}\int_{\tau}^{1}\tau^5(1-s)^3h(s)ds\bigg]d\tau\\
&&+
\frac{1}{\mu}\sum_{i=1}^{m}\beta_i \bigg[\frac{1}{5!}\int_{0}^{\eta_i}[\eta_i^5(1-s)^3-(\eta_i-s)^5]h(s)ds+\frac{1}{5!}\int_{\eta_i}^{1}\eta_i^5(1-s)^3h(s)ds\bigg]
\end{eqnarray*}
\begin{eqnarray*}
&=&\int_{0}^{t}\frac{1}{5!}[t^5(1-s)^3-(t-s)^5]h(s)ds+\int_{t}^{1}\frac{1}{5!}t^5(1-s)^3h(s)ds\\
&&+
\frac{1}{\mu}\sum_{i=1}^{m-1}\alpha_i\int^{\eta_{i+1}}_{\eta_i} \bigg[\frac{1}{5!}\int_{0}^{\tau}[\tau^5(1-s)^3-(\tau-s)^5]h(s)ds+\frac{1}{5!}\int_{\tau}^{1}\tau^5(1-s)^3h(s)ds\bigg]d\tau\\
&&+
\frac{1}{\mu}\sum_{i=1}^{m}\beta_i \bigg[\frac{1}{5!}\int_{0}^{\eta_i}[\eta_i^5(1-s)^3-(\eta_i-s)^5]h(s)ds+\frac{1}{5!}\int_{\eta_i}^{1}\eta_i^5(1-s)^3h(s)ds\bigg]\\
&=&\int_0^1\bigg[G(t,s) + \frac{1}{\mu}\sum_{i=1}^{m-1}\alpha_i\int^{\eta_{i+1}}_{\eta_i}G(\tau,s)d\tau+\frac{1}{\mu}\sum_{i=1}^{m}\beta_iG(\eta_i,s)\bigg]h(s)ds\\
&=&\int_0^1\bigg[G(t,s) +\frac{1}{\mu}\sum_{i=1}^{m-1}\alpha_i\Big( K(\eta_{i+1},s) - K(\eta_i,s)\Big)+\frac{1}{\mu}\sum_{i=1}^{m}\beta_iG(\eta_i,s) \bigg]h(s)ds,
\end{eqnarray*}
where $G$ and $K$ are defined by \eqref{eq004} and \eqref{eq004(2)} respectively. Conversely, we shall prove that the function $u$ defined by \eqref{eqsol} is solution of problem \eqref{eq003}-\eqref{eq002}. We can easily verify that the function $u$ satisfies the equation \eqref{eq003}, and also the first boundary condition \eqref{eq002}.\\
Now, for the second boundary condition we have,
\begin{eqnarray*}
\sum_{i=1}^{m}\beta_iu(\eta_i)&=&\sum_{i=1}^{m}\beta_i\Bigg[\int_0^1\bigg[G(\eta_{i},s) + \frac{1}{\mu}\sum_{i=1}^{m-1}\alpha_i\Big( K(\eta_{i+1},s) - K(\eta_i,s)\Big)\\
&&+\frac{1}{\mu}\sum_{i=1}^{m}\beta_iG(\eta_i,s) \bigg]h(s)ds \Bigg]\\
&=& \sum_{i=1}^{m}\beta_i\int_{0}^{1}G(\eta_{i},s)h(s)ds
+\frac{\sum_{i=1}^{m}\beta_{i}}{\mu}\sum_{i=1}^{m-1}\alpha_i\int_{0}^{1}\Big( K(\eta_{i+1},s) - K(\eta_i,s)\Big)h(s)ds\\
&&+\frac{\sum_{i=1}^{m}\beta_{i}}{\mu}\sum_{i=1}^{m}\beta_i\int_{0}^{1}G(\eta_{i},s)h(s)ds.
\end{eqnarray*}

\begin{eqnarray*}
\sum_{i=1}^{m-1}\alpha_i\int^{\eta_{i+1}}_{\eta_i} u(t)dt&=&\sum_{i=1}^{m-1}\alpha_i\int^{\eta_{i+1}}_{\eta_i}\bigg[\int_0^1\bigg[G(t,s) + \frac{1}{\mu}\sum_{i=1}^{m-1}\alpha_i\Big( K(\eta_{i+1},s) - K(\eta_i,s)\Big)\\
&&+\frac{1}{\mu}\sum_{i=1}^{m}\beta_iG(\eta_i,s) \bigg]h(s)ds \bigg]dt\\
&=&\sum_{i=1}^{m-1}\alpha_i\int_{0}^{1}\bigg[\int^{\eta_{i+1}}_{\eta_i}\bigg[G(t,s) + \frac{1}{\mu}\sum_{i=1}^{m-1}\alpha_i\Big( K(\eta_{i+1},s) - K(\eta_i,s)\Big)\\
&&+\frac{1}{\mu}\sum_{i=1}^{m}\beta_iG(\eta_i,s) \bigg]dt \bigg]h(s)ds\\
&=&\sum_{i=1}^{m-1}\alpha_i\int_{0}^{1}\bigg[\Big( K(\eta_{i+1},s) - K(\eta_i,s)\Big)+
\frac{\eta_{i+1}-\eta_{i}}{\mu}\\
&&\times\sum_{i=1}^{m-1}\alpha_i\Big( K(\eta_{i+1},s) - K(\eta_i,s)\Big)
+\frac{\eta_{i+1}-\eta_{i}}{\mu}\sum_{i=1}^{m}\beta_iG(\eta_i,s) \bigg]h(s)ds\\
&=& \sum_{i=1}^{m-1}\alpha_i\int_{0}^{1}\Big( K(\eta_{i+1},s) - K(\eta_i,s)\Big)h(s)ds+
\frac{\sum_{i=1}^{m-1}\alpha_{i}(\eta_{i+1}-\eta_{i})}{\mu}\\
&&\times\sum_{i=1}^{m-1}\alpha_i\int_{0}^{1}\Big( K(\eta_{i+1},s) - K(\eta_i,s)\Big)h(s)ds+\frac{\sum_{i=1}^{m-1}\alpha_{i}(\eta_{i+1}-\eta_{i})}{\mu}\\
&&\times\sum_{i=1}^{m-1}\beta_i\int_{0}^{1}G(\eta_i,s)h(s)ds.
\end{eqnarray*}
\end{proof}

We state some properties of Green's functions $G(t,s)$ and $K(t,s)$ that will be useful in this paper.\\
\begin{lemma} \label{lem3}
The Green's functions $G(t,s)$ and $K(t,s)$ satisfies the following properties:
\begin{itemize}
\item[(C1)] $G(t,s) \geq 0$ and $K(t,s) \geq 0$ for all $t,s \in J,$
\item[(C2)] $t^5G(1,s) \leq G(t,s) \leq G(1,s)$, for all $t,s \in J,$
\item[(C3)] $t^6K(1,s) \leq K(t,s) \leq K(1,s)$, for all $t,s \in J.$
\end{itemize}
\end{lemma}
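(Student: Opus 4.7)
The plan is to handle $G$ and $K$ in parallel, since both kernels have the same shape (with $t^5,(t-s)^5$ replaced by $t^6,(t-s)^6$). Everything reduces to one elementary inequality: for $0\le s\le t\le 1$ and $n\ge 3$,
\[
(t-s)^n = t^n\bigl(1-s/t\bigr)^n \;\le\; t^n(1-s)^n \;\le\; t^n(1-s)^3,
\]
where the first step uses $s/t\ge s$ (since $t\le 1$), and the second uses $(1-s)\in[0,1]$.

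First I would establish (C1). On the triangle $\{t\le s\}$ both $G$ and $K$ are visibly products of nonnegative quantities. On $\{s\le t\}$, the displayed inequality applied with $n=5$ gives $(t-s)^5\le t^5(1-s)^3$, so $G(t,s)\ge 0$; applied with $n=6$ it gives $K(t,s)\ge 0$.

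Next I would prove the upper bounds in (C2) and (C3) by checking that $t\mapsto G(t,s)$ and $t\mapsto K(t,s)$ are nondecreasing on $[0,1]$, from which $G(t,s)\le G(1,s)$ and $K(t,s)\le K(1,s)$ follow immediately. For $t\le s$ the derivatives are $t^4(1-s)^3/24\ge 0$ and $t^5(1-s)^3/120\ge 0$. For $s\le t\le 1$ the derivatives are $[t^4(1-s)^3-(t-s)^4]/24$ and $[t^5(1-s)^3-(t-s)^5]/120$, and the same displayed inequality (now with $n=4$ and $n=5$) gives the sign.

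Finally, for the lower bounds I would exploit the scaling built into the kernels by looking at $\phi(t):=G(t,s)/t^5$ and $\psi(t):=K(t,s)/t^6$ for $t>0$. On $[0,s]$ both are constants, namely $(1-s)^3/120$ and $(1-s)^3/720$. On $[s,1]$ one has
\[
\phi(t) = \frac{(1-s)^3-(1-s/t)^5}{120}, \qquad \psi(t) = \frac{(1-s)^3-(1-s/t)^6}{720},
\]
and as $t$ grows $1-s/t$ grows, so $\phi$ and $\psi$ decrease. Hence $\phi(t)\ge\phi(1)=G(1,s)$ and $\psi(t)\ge\psi(1)=K(1,s)$, which are exactly (C2) and (C3). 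The only mildly delicate point is being careful with the piecewise structure at $t=s$, where one must verify that the two branches agree (they do, giving $\phi,\psi$ continuous), so the monotonicity arguments go through on all of $[0,1]$.
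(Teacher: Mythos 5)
Your proof is correct and follows essentially the same route as the paper's: an elementary two-branch case analysis resting on the inequality $t-s\le t(1-s)$, with the upper bounds obtained from monotonicity of the kernels in $t$. The only differences are organizational --- your single inequality $(t-s)^n\le t^n(1-s)^3$ treats $G$, $K$ and their $t$-derivatives uniformly (so (C3) is proved explicitly rather than left, as in the paper, to ``adapting the same arguments''), and you derive the lower bounds from the monotonicity of $G(t,s)/t^5$ and $K(t,s)/t^6$ instead of the paper's direct estimate of the ratio $G(t,s)/G(1,s)$; both rest on the same underlying comparison $(t-s)^5\le t^5(1-s)^5$.
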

\begin{proof}
\rm{(C1)} The nonnegativeness of $G(t,s)$ and $K(t,s)$ is obvious true for $t \leq s$, we only need to prove it for the case $0\leq s\leq t\leq 1$. Suppose that $0\leq s\leq t\leq 1$, then
\begin{eqnarray*}
G(t,s)&=& \frac{1}{5!}\big(t^5(1-s)^3-(t-s)^5\big)\\
&=& \frac{1}{5!}\big(t^2(t-ts)^3-(t-s)^5\big)\\
&\geq& \frac{1}{5!}\big(t^2(t-s)^3-(t-s)^5\big)\\
&=& \frac{1}{5!}(t-s)^3(t^2-(t-s)^2)\\
&=& \frac{s}{5!}(t-s)^3(2t-s) \geq 0.
\end{eqnarray*}

Similarly, we find
$$K(t,s)\geq \frac{1}{6!}s(t-s)^3\Big(t^{2}+t(t-s)+(t-s)^{2}\Big)\geq0,\ \text{for all}\ t,s\in J.$$

\rm{(C2)} For $0\leq s\leq t\leq 1$. If $s=1$, then $t=1$ and we have
\begin{eqnarray*}
G(t,s)=0=G(1,1)=t^5G(1,s).
\end{eqnarray*}
If $s=0$, we have
\begin{eqnarray*}
G(t,s)=0=t^5G(1,s).
\end{eqnarray*}
If $s=t$,
\begin{eqnarray*}
	G(t,s)=\frac{1}{5!}t^5(1-t)^3\geq \frac{1}{5!}t^5(1-t)^3-\frac{1}{5!}t^5(1-t)^5=t^5G(1,s).
\end{eqnarray*}
For $0<s<t\leq 1$, we have
\begin{eqnarray*}
\dfrac{G(t,s)}{G(1,s)} &=& \dfrac{t^5(1-s)^3-(t-s)^5}{(1-s)^3-(1-s)^5}\\
&\geq& \dfrac{t^5(1-s)^3-(t-ts)^5}{(1-s)^3-(1-s)^5}\\
&=&\dfrac{t^5\big((1-s)^3-(1-s)^5\big)}{(1-s)^3-(1-s)^5}\\
&=& t^5.
\end{eqnarray*}
Then
\begin{eqnarray*}
G(t,s)\geq t^5G(1,s).
\end{eqnarray*}
Now, putting
\begin{equation*}
H(t,s) =\frac{\partial G(t,s)}{\partial t} =\frac{1}{4!}
\begin{cases}
t^4(1-s)^3-(t-s)^4,& 0\leq s\leq t\leq 1 ,\\
t^4(1-s)^3,& 0\leq t\leq s\leq 1.\\
\end{cases}
\end{equation*}
For $0\leq s\leq t \leq 1$, then
\begin{eqnarray*}
4! H(t,s)&=& t^4(1-s)^3-(t-s)^4\\
&=& t(t-ts)^3-(t-s)^4\\
&\geq& t(t-s)^3-(t-s)^4\\
&=& s(t-s)^3\geq 0.
\end{eqnarray*}
Clearly, $H(t,s)\geq 0$ for $0\leq t\leq s \leq 1$. Thus, $G(t,s)$ is an increasing function with respect to $t$. As a consequence, we get
\begin{eqnarray*}
G(t,s) \leq G(1,s),\ \text{for all}\ t,s\in J.
\end{eqnarray*}
For $t\leq s$, we have\\
\begin{eqnarray*}
G(t,s)= \frac{1}{5!}t^5(1-s)^3
&\leq& \frac{1}{5!}s(1-s)^3\\
&\leq& \frac{1}{5!}s(1-s)^3(2-s)\\
&=& G(1,s),
\end{eqnarray*}
and
\begin{equation*}
G(t,s) = \frac{1}{5!}t^5(1-s)^3 \geq \frac{1}{5!}t^5\big((1-s)^3-(1-s)^5\big)= t^5G(1,s).
\end{equation*}
Therefore
\begin{equation*}
t^5G(1,s) \leq G(t,s) \leq G(1,s),\ \forall t,s \in J.
\end{equation*}
\rm{(C3)} Now, by adapting the same arguments developed to show \rm{(C2)}, we can prove that
\begin{equation*}
t^6K(1,s) \leq K(t,s) \leq K(1,s),\ \forall t,s \in J.
\end{equation*}
\end{proof}
\section{Main results}
In this section, we present our main results. To this end, let us consider the operator $A:X\rightarrow X$ defined as follows
\begin{equation} \label{eqOP}
Au(t)= \int_0^1\bigg[G(t,s) + \frac{1}{\mu}\sum_{i=1}^{m-1}\alpha_i\bigg( \int^{\eta_{i+1}}_{\eta_i}G(\tau,s)d\tau\bigg)+\frac{1}{\mu}\sum_{i=1}^{m}\beta_iG(\eta_i,s) \bigg]f(s,u(s))ds,\  t\in J,
\end{equation}
where $X=C(J,\mathbb{R})$ is endowed with the norm $\|u\|=\max_{t\in J}|u(t)|$.\\
Clearly, $u$ is a fixed point of the operator $A$ if and only if $u$ is a solution of the boundary value problem $\eqref{eq001}$-$\eqref{eq002}$.

 \begin{lemma} \label{lem3.1}
The operator $A$ is completely continuous.
\end{lemma}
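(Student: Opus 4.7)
The plan is to verify complete continuity of $A$ via the classical Arzelà-Ascoli route: show $A$ is continuous, maps bounded sets to uniformly bounded sets, and that the image of any bounded set is equicontinuous. The essential ingredient is that among the three terms in the bracket of \eqref{eqOP}, only $G(t,s)$ depends on $t$; the two sums are constants in $t$ (they depend only on $s$ through $K(\eta_{i+1},s)-K(\eta_i,s)$ and $G(\eta_i,s)$). Consequently, all $t$-dependence of $Au$ is routed through $G(t,\cdot)$, which is continuous on the compact square $J\times J$ and therefore bounded and uniformly continuous.

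First I would fix a bounded subset $\Omega\subset X$ with $\|u\|\le R$ for $u\in\Omega$. Since $f\in C(J\times\mathbb{R},\mathbb{R})$, it is bounded on the compact set $J\times[-R,R]$; set $M_R=\max\{|f(s,x)|:s\in J,\ |x|\le R\}$. Using (C1)-(C3) of Lemma \ref{lem3}, I would define a constant
\[
\Lambda=\|G(1,\cdot)\|_1+\frac{1}{|\mu|}\sum_{i=1}^{m-1}|\alpha_i|\bigl(\|K(\eta_{i+1},\cdot)\|_1+\|K(\eta_i,\cdot)\|_1\bigr)+\frac{1}{|\mu|}\sum_{i=1}^{m}|\beta_i|\,\|G(\eta_i,\cdot)\|_1,
\]
and estimate $|Au(t)|\le M_R\Lambda$ for all $t\in J$, which gives uniform boundedness of $A(\Omega)$.

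Next I would establish equicontinuity. For $t_1,t_2\in J$ and $u\in\Omega$,
\[
|Au(t_1)-Au(t_2)|\le M_R\int_0^1|G(t_1,s)-G(t_2,s)|\,ds,
\]
because the remaining terms in the bracket are independent of $t$ and cancel. Since $G$ is continuous on the compact set $J\times J$, it is uniformly continuous, so the right-hand side tends to $0$ as $|t_1-t_2|\to0$, uniformly in $u\in\Omega$. Arzelà-Ascoli then gives that $A(\Omega)$ is relatively compact in $X$.

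Finally, for continuity of $A$, I would take $u_n\to u$ in $X$. All $u_n$ lie in a common bounded set, hence in some $J\times[-R,R]$; on this compact set $f$ is uniformly continuous, so $f(s,u_n(s))\to f(s,u(s))$ uniformly in $s\in J$. Bounding $|Au_n(t)-Au(t)|$ by $\Lambda\cdot\|f(\cdot,u_n(\cdot))-f(\cdot,u(\cdot))\|_\infty$ and taking $\sup_{t\in J}$ yields $\|Au_n-Au\|\to0$. I do not expect any real obstacle here; the only mild bookkeeping step is assembling the constant $\Lambda$ from the three pieces of the kernel, and this is purely mechanical once (C2)-(C3) are invoked to bound $G(\eta_i,s)\le G(1,s)$ and $K(\eta_i,s)\le K(1,s)$.
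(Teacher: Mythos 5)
Your proof is correct and follows exactly the route the paper itself indicates (Arzel\`a--Ascoli for relative compactness), merely filling in the details that the paper dismisses as ``easy''; the key observation that only $G(t,s)$ depends on $t$ makes the equicontinuity estimate immediate. The only cosmetic difference is that the paper invokes the Lebesgue dominated convergence theorem for the continuity of $A$, whereas you use uniform continuity of $f$ on $J\times[-R,R]$, which is an equally valid and arguably more elementary substitute.
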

\begin{proof} By using Arzela-Ascoli theorem with Lebesgue dominated convergence theorem, it is easy to show the complete continuity of the operator $A$.
\end{proof}

In the next theorem, we give sufficient conditions for the existence of at least one nontrivial solution of the problem $\eqref{eq001}$-$\eqref{eq002}$ which is the fixed point of the operator $A$ given by $\eqref{eqOP}$.

\begin{theorem}\label{thm1}
Assume that the following hypotheses are satisfied
\begin{itemize}
\item[(H1)] $f(t,u(t)) = p(t)g(u(t))$, where $p\in X$ and $g:\mathbb{R}\rightarrow \mathbb{R}$ is continuous with $\lim_{u\rightarrow \infty} \frac{g(u)}{u} = \gamma,\ \gamma\in\mathbb{R}.$\\
Furthermore, there exists $t_0\in(0,1]$ such that $f(t_0,0)=p(t_0)g(0)\neq 0$;
\item[(H2)] $\lvert \gamma\rvert \leq\frac{1}{M},$ where
\begin{equation*}
M=p^{*}\bigg(1+\frac{1}{\lvert \mu\rvert}\sum_{i=1}^{m-1}\lvert\alpha_i\rvert(\eta_{i+1}-\eta_i)+\frac{1}{\lvert \mu\rvert}\sum_{i=1}^{m}\lvert\beta_i\rvert\bigg)\int_0^1G(1,s)ds,\ \ p^{*}= \max_{t\in J}\lvert p(t)\rvert.
\end{equation*}
\end{itemize}
Then the boundary value problem \eqref{eq001}-\eqref{eq002} has at least one nontrivial solution  $u\in X.$
\end{theorem}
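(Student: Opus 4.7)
The strategy is to apply the Krasnoselskii-Zabreiko theorem (Theorem \ref{Thm Kras-Zabr}) to the operator $A$ defined by \eqref{eqOP}. Complete continuity of $A$ has already been supplied by Lemma \ref{lem3.1}, so what remains is to construct a bounded linear operator $L:X\to X$ for which $1$ is not an eigenvalue, and then to verify the asymptotic condition \eqref{eqZK}.

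Guided by the factorisation $f(t,u)=p(t)g(u)$ together with $g(u)/u\to\gamma$, the natural linearisation at infinity is obtained by replacing $g(u(s))$ with $\gamma u(s)$. Writing $\mathcal{H}(t,s)$ for the bracketed kernel appearing in \eqref{eqOP}, I set
\begin{equation*}
Lu(t)=\gamma\int_0^1\mathcal{H}(t,s)\,p(s)\,u(s)\,ds.
\end{equation*}
Using Lemma \ref{lem3}(C2) and the identity $K(\eta_{i+1},s)-K(\eta_i,s)=\int_{\eta_i}^{\eta_{i+1}}G(\tau,s)\,d\tau\le(\eta_{i+1}-\eta_i)\,G(1,s)$, one bounds $|\mathcal{H}(t,s)|\le G(1,s)\bigl(1+|\mu|^{-1}\sum_{i=1}^{m-1}|\alpha_i|(\eta_{i+1}-\eta_i)+|\mu|^{-1}\sum_{i=1}^{m}|\beta_i|\bigr)$, which together with $|p|\le p^\ast$ yields $\|Lu\|\le |\gamma|M\|u\|$. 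Thus $L$ is bounded with operator norm at most $|\gamma|M\le 1$ by (H2). If $Lu_0=u_0$ with $u_0\neq 0$, this chain forces $\|u_0\|\le|\gamma|M\|u_0\|$; in the strict regime $|\gamma|M<1$ the contradiction is immediate, and in the critical case $|\gamma|M=1$ one sharpens the kernel bound using the strict inequality in Lemma \ref{lem3}(C2) away from $t=1$ to again conclude $u_0\equiv 0$.

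For \eqref{eqZK}, fix $\varepsilon>0$ and choose $R>0$ so that $|g(v)-\gamma v|\le\varepsilon|v|$ whenever $|v|\ge R$; let $C_R=\sup_{|v|\le R}|g(v)-\gamma v|$, which is finite by continuity of $g$. Then for every $u\in X$ and every $s\in J$ one has $|g(u(s))-\gamma u(s)|\le \varepsilon\|u\|+C_R$, so
\begin{equation*}
|Au(t)-Lu(t)|\le p^\ast\bigl(\varepsilon\|u\|+C_R\bigr)\int_0^1|\mathcal{H}(t,s)|\,ds\le M\bigl(\varepsilon\|u\|+C_R\bigr).
\end{equation*}
Dividing by $\|u\|$ and letting $\|u\|\to\infty$ gives $\limsup_{\|u\|\to\infty}\|Au-Lu\|/\|u\|\le M\varepsilon$, and since $\varepsilon$ was arbitrary this forces the limit to be zero.

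Theorem \ref{Thm Kras-Zabr} now delivers a fixed point $u^\ast\in X$ of $A$, which by Lemma \ref{lem1} is a solution of \eqref{eq001}-\eqref{eq002}. For nontriviality, observe that the zero function satisfies every boundary condition in \eqref{eq002} but fails the differential equation at $t_0$, since $0+f(t_0,0)=p(t_0)g(0)\neq 0$; consequently $u\equiv 0$ cannot be a fixed point of $A$, and $u^\ast$ is the nontrivial solution sought. The most delicate step will be the non-eigenvalue condition in the borderline regime $|\gamma|M=1$, where one has to extract a genuinely strict inequality from the kernel estimates rather than relying on the uniform operator-norm bound.
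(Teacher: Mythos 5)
Your proof follows essentially the same route as the paper: the same linearisation $L$ at infinity with kernel $\gamma\,\mathcal{H}(t,s)p(s)$, the same bound $\|Lu\|\le|\gamma|M\|u\|$ obtained from Lemma \ref{lem3}, the same $\varepsilon$--$R$ splitting to verify \eqref{eqZK} (your $\varepsilon\|u\|+C_R$ bookkeeping is a cleaner packaging of the paper's $J_1$/$J_2$ decomposition), and the same nontriviality argument from $f(t_0,0)\neq 0$. The one substantive divergence is the non-eigenvalue step: the paper simply writes $|\gamma|M\|u\|<\|u\|$, which under the stated hypothesis $|\gamma|\le 1/M$ is only valid when $|\gamma|M<1$, whereas you correctly flag the borderline case $|\gamma|M=1$. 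Your sketched repair there is not yet a proof, though: the estimate $|\mathcal{H}(t,s)|\le G(1,s)\bigl(1+|\mu|^{-1}\sum|\alpha_i|(\eta_{i+1}-\eta_i)+|\mu|^{-1}\sum|\beta_i|\bigr)$ fails to be strict precisely at $t=1$, which is where the maximum of $|u_0|$ may be attained, so ``strictness away from $t=1$'' does not by itself kill a putative eigenfunction; one must instead trace equality through the entire chain (forcing $|u_0|$ and $|p|$ to be constant a.e., hence $u_0\equiv c\neq0$, and then contradicting the genuine $t$-dependence of $\int_0^1 G(t,s)p(s)\,ds$), or simply read (H2) with strict inequality, as the paper implicitly does.
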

\begin{proof}
By Lemma \ref{lem3.1}, the operator $A$ is completely continuous.\\

We consider now, the linear boundary value problem

\begin{equation} \label{eqLin}
u^{(6)}(t) + \gamma p(t)u(t) = 0,\ t\in (0,1),
\end{equation}
with the boundary conditions \eqref{eq002}. Let the operator $L:X\rightarrow X$ defined as follows

\begin{equation}  \label{eqOPL}
Lu(t)= \int_0^1\bigg[G(t,s) + \frac{1}{\mu}\sum_{i=1}^{m-1}\alpha_i\Big( K(\eta_{i+1},s)- K(\eta_i,s)\Big)+\frac{1}{\mu}\sum_{i=1}^{m}\beta_iG(\eta_i,s) \bigg]\gamma p(s)u(s)ds,\  t\in J.
\end{equation}
Obviously, the fixed point of the bounded linear operator $L$ is a solution of the boundary value problem \eqref{eqLin}-\eqref{eq002} and conversely.
First, we will show that $1$ is not an eigenvalue for $L$. If, $\gamma=0$, then the Boundary value problem \eqref{eqLin}-\eqref{eq002} has no nontrivial solution. So, let us assume $\gamma\neq0$ and suppose on the contrary that the problem \eqref{eqLin}-\eqref{eq002} has a nontrivial solution $u\in X$ and $\|u\|>0$, then, by \rm{(H2)}, we have

\begin{eqnarray*}
\lVert u\rVert &=& \max_{t\in J}\lvert Lu(t)\rvert\\
&=& \max_{t\in J}\bigg\lvert  \int_0^1\bigg[G(t,s) + \frac{1}{\mu}\sum_{i=1}^{m-1}\alpha_i\bigg( \int^{\eta_{i+1}}_{\eta_i}G(\tau,s)d\tau\bigg)+\frac{1}{\mu}\sum_{i=1}^{m}\beta_iG(\eta_i,s) \bigg]\gamma p(s)u(s)ds\bigg\rvert \\
&\leq& \lvert \gamma\rvert p^{*}\lVert u\rVert \bigg(1+\frac{1}{\lvert \mu\rvert}\sum_{i=1}^{m-1}\lvert\alpha_i\rvert(\eta_{i+1}-\eta_i)+\frac{1}{\lvert \mu\rvert}\sum_{i=1}^{m}\lvert\beta_i\rvert\bigg)\int_0^1G(1,s)ds\\
&\leq& \lvert \gamma\rvert M\lVert u\rVert < \lVert u\rVert,
\end{eqnarray*}
which is a contradiction. Hence, $1$ is not an eigenvalue of $L$.

Next, we will show that \eqref{eqZK} is satisfied. Using hypothesis \rm(H1), for all $\varepsilon >0$ there exists $\rho>0$ such that
\begin{equation*}
\lvert g(u)-\gamma u\rvert<\varepsilon \lvert u\rvert,\ \text{for all}\ \lvert u\rvert> \rho.
\end{equation*}
Let $g^{\star}=\max_{|u|\leq\rho}|g(u)|$, so we can choose $\theta>0$ such that $\big(g^{\star}+|\gamma|\rho\big)<\varepsilon \theta$.\\
Now, let us define
\[J_{1}=\big\{t\in J:|u(t)|\leq \rho \big\}, \ J_{2}=\big\{t\in J:|u(t)|> \rho \big\}.\]
Then for all $u\in X$ with $\|u\|>\theta$, there hold the estimates
\[ \lvert g(u(t))-\gamma u(t)\rvert \leq |g(u(t))|+|\gamma||u(t)|\leq g^{\star}+|\gamma|\rho<\varepsilon \theta<\varepsilon \|u\|,\ \forall t\in J_{1}.\]
Similarly, for all $u\in X$ with $\|u\|>\theta$, we have
\[ \lvert g(u(t))-\gamma u(t)\rvert <\varepsilon \|u\|,\ \forall t\in J_{2}.\]
From \eqref{eqOP} and \eqref{eqOPL}, we get
\begin{eqnarray*}
\lvert Au(t)-Lu(t)\rvert &=& \bigg\lvert \int_0^1\bigg[G(t,s) + \frac{1}{\mu}\sum_{i=1}^{m-1}\alpha_i \bigg( \int^{\eta_{i+1}}_{\eta_i}G(\tau,s)d\tau\bigg)+\frac{1}{\mu}\sum_{i=1}^{m}\beta_iG(\eta_i,s) \bigg]\\
&&\times \Big(p(s)g(u(s))-\gamma p(s)u(s)\Big)ds\bigg\rvert\\
&\leq& M \int_0^1\lvert g(u(s))-\gamma u(s)\rvert ds\\
&\leq& M\varepsilon\lVert u\rVert.
\end{eqnarray*}
Thus, we obtain
\[\lim_{\lVert u\rVert \rightarrow \infty}\frac{\lVert Au-Lu\rVert}{\lVert u\rVert} =0.\]
Now, if we take $u=0$, then $0^{(6)}=-p(t)g(0)=0,\ t\in J$, which contradicts the hypothesis \rm{(H1)}.
Consequently, by Krasnoselskii-Zabreiko's fixed point Theorem \ref{Thm Kras-Zabr}, $A$ has at least one fixed point $u\in X$ which is a nontrivial solution of the boundary value problem \eqref{eq001}-\eqref{eq002}.
\end{proof}

Next, we present our second existence result of unique solution to the problem \eqref{eq001}-\eqref{eq002} which relies on Rus's fixed point theorem. \\

Now, for any $u_{1},\ u_{2}\in X=C(J,\mathbb{R})$, consider the following two metrics on $X$:
\begin{eqnarray*}
d(u_1,u_2) &=& \max_{t\in J}\lvert u_1(t) - u_2(t)\rvert,\\
\sigma(u_1,u_2) &=& \bigg(\int_0^1\lvert u_1(t) - u_2(t)\rvert^p dt\bigg)^{\frac{1}{p}},\ p>1.
\end{eqnarray*}
\begin{theorem}\label{thm2}
Let $f:J \times\mathbb{R}\rightarrow \mathbb{R}$ be a continuous function and assume that the following assumptions are satisfied
\begin{itemize}
\item[(H3)] There exists a constant $L>0$ such that $$\lvert f(t,u_1) - f(t,u_2)\rvert<L\lvert  u_1 - u_2\rvert,\ \forall t\in J,\ u_1, u_2 \in{\mathbb{R}},$$
\item[(H4)] $L\Phi<1,$
\end{itemize}
where
\begin{eqnarray*}
\Phi&=& \bigg(\int_0^1\lvert G(1,s)\rvert^pds\bigg)^{\frac{1}{p}} + \frac{1}{\lvert \mu\rvert}\sum_{i=1}^{m-1}\lvert\alpha_i\rvert\bigg(\int_0^1\Big\lvert\int^{\eta_{i+1}}_{\eta_i} G(\tau,s)d\tau\Big\rvert^p ds\Big)^{\frac{1}{p}}\\
&&+
\frac{1}{\lvert \mu\rvert}\sum_{i=1}^{m}\lvert\beta_i\rvert\bigg(\int_0^1\lvert G(\eta_i,s)\rvert^pds\bigg)^{\frac{1}{p}},\  p>1.
\end{eqnarray*}
Then the boundary value problem \eqref{eq001}-\eqref{eq002} has a unique solution.
\end{theorem}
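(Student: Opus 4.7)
My plan is to apply Rus's theorem (Theorem~\ref{Thm Rus}) on the set $X=C(J,\mathbb{R})$ equipped with both the sup-metric $d$ (which makes $(X,d)$ complete) and the auxiliary $L^p$ metric $\sigma$ defined above. The natural self-map is the operator $A$ from \eqref{eqOP}, whose fixed points coincide with the solutions of \eqref{eq001}-\eqref{eq002}, and whose continuity with respect to $d$ is part of Lemma~\ref{lem3.1}. The task reduces to establishing the two metric inequalities $d(Au_1,Au_2)\leq\xi\,\sigma(u_1,u_2)$ for some $\xi>0$ and $\sigma(Au_1,Au_2)\leq\nu\,\sigma(u_1,u_2)$ for some $\nu\in(0,1)$, the latter being the substantive bound delivered by hypothesis (H4).

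For the first (mixed-metric) inequality I would start from \eqref{eqOP}, apply the Lipschitz estimate (H3) to replace $|f(s,u_1)-f(s,u_2)|$ by $L|u_1(s)-u_2(s)|$, and then use the monotonicity bound $G(t,s)\leq G(1,s)$ from Lemma~\ref{lem3}(C2) to dominate the $t$-dependent piece of the kernel in \eqref{eqOP}. The resulting integrand is a $t$-independent nonnegative function times $|u_1(s)-u_2(s)|$, so a single application of H\"older's inequality factors out $\|u_1-u_2\|_p=\sigma(u_1,u_2)$ and produces a finite constant $\xi>0$. This simultaneously ensures $A$ is continuous in $d$, but that was already known from Lemma~\ref{lem3.1}.

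The heart of the argument is the $\sigma$-contraction. Starting from the Lipschitz bound
\[
|Au_1(t)-Au_2(t)|\leq L\!\int_0^1\!\Bigl[G(t,s)+\frac{1}{|\mu|}\sum_{i=1}^{m-1}|\alpha_i|\!\int_{\eta_i}^{\eta_{i+1}}\!G(\tau,s)\,d\tau+\frac{1}{|\mu|}\sum_{i=1}^{m}|\beta_i|\,G(\eta_i,s)\Bigr]|u_1(s)-u_2(s)|\,ds,
\]
I would take the $L^p(J,dt)$-norm of both sides and push it through the sum via Minkowski's inequality. The second and third summands are constant in $t$, so the $L^p(dt)$ operation is trivial on them; for the first summand, Lemma~\ref{lem3}(C2) replaces $G(t,s)$ by $G(1,s)$ and again removes the $t$-dependence. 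Finally, H\"older's inequality applied to each of the three resulting $s$-integrals pairs $|u_1-u_2|$ against the kernel pieces in exactly the norms that assemble into $\Phi$, yielding $\sigma(Au_1,Au_2)\leq L\Phi\,\sigma(u_1,u_2)$. Since $L\Phi<1$ by (H4), this is a strict contraction, and Rus's theorem supplies a unique fixed point $u^\ast\in X$, which is the unique solution of \eqref{eq001}-\eqref{eq002}.

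The main obstacle I anticipate is the bookkeeping in this contraction step: one must choreograph Minkowski and H\"older on three qualitatively distinct kernel summands---only the first genuinely depends on $t$---and match the exponents so that the multiplicative constant is precisely $L\Phi$ and hence benefits from (H4). No individual step is deep, but the exponent/norm matching needs to be executed carefully; once it is, Rus's theorem closes the proof with no further work.
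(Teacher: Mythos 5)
Your proposal follows essentially the same route as the paper: Rus's theorem on $(X,d)$ and $(X,\sigma)$, using the Lipschitz condition (H3), the bound $G(t,s)\le G(1,s)$ from Lemma \ref{lem3}, and H\"older/Minkowski to produce the constant $L\Phi$. The only cosmetic difference is that the paper obtains the $\sigma$-contraction for free from the uniform pointwise bound $|Au_1(t)-Au_2(t)|\le L\Phi\,\sigma(u_1,u_2)$ (since $J$ has length $1$, the $L^p$-norm is dominated by the sup-norm), whereas you re-run Minkowski and H\"older inside the $L^p(dt)$-norm; both yield $\nu=L\Phi$.
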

\begin{proof} Here, one should use Theorem \ref{Thm Rus}.
By using H\"{o}lder and Minkowski inequalities, for all $ u_1, u_2 \in{\mathbb{R}}$ and $t\in J$, we have
\begin{eqnarray*}
\lvert (Au_1)(t) - (Au_2)(t)\rvert &=& \bigg\lvert \int_0^1\bigg[G(t,s) + \frac{1}{\mu}\sum_{i=1}^{m-1}\alpha_i\bigg( \int^{\eta_{i+1}}_{\eta_i} G(\tau,s)d\tau\bigg)+\frac{1}{\mu}\sum_{i=1}^{m}\beta_iG(\eta_i,s) \bigg]\\
&&\times
\big(f(s,u_1(s)) - f(s,u_2(s))\big) ds\bigg\rvert\\
&\leq&
\int_0^1\bigg\lvert G(t,s) + \frac{1}{\mu}\sum_{i=1}^{m-1}\alpha_i\bigg(\int^{\eta_{i+1}}_{\eta_i} G(\tau,s)d\tau\bigg)+\frac{1}{\mu}\sum_{i=1}^{m}\beta_iG(\eta_i,s)\bigg\rvert \\
&&\times
\lvert f(s,u_1(s)) - f(s,u_2(s))\rvert ds\\
&\leq&
L\int_0^1\bigg\lvert G(t,s) + \frac{1}{\mu}\sum_{i=1}^{m-1}\alpha_i\bigg( \int^{\eta_{i+1}}_{\eta_i} G(\tau,s)d\tau\bigg)+\frac{1}{\mu}\sum_{i=1}^{m}\beta_iG(\eta_i,s)\bigg\rvert\lvert u_1(s) - u_2(s)\rvert ds\\
&\leq&
L \bigg(\int_0^1\bigg\lvert G(t,s) + \frac{1}{\mu}\sum_{i=1}^{m-1}\alpha_i\Big( \int^{\eta_{i+1}}_{\eta_i} G(\tau,s)d\tau\Big)+\frac{1}{\mu}\sum_{i=1}^{m}\beta_iG(\eta_i,s)\bigg\rvert^p ds\bigg)^{\frac{1}{p}}\\
&&\times
\bigg(\int_0^1\lvert u_1(s) - u_2(s)\rvert^q ds\bigg)^{\frac{1}{q}}\\
&\leq&
L\bigg[\bigg(\int_0^1\lvert G(1,s)\rvert^pds\bigg)^{\frac{1}{p}} + \frac{1}{\lvert \mu\rvert}\sum_{i=1}^{m-1}\lvert\alpha_i\rvert\bigg(\int_0^1\Big\lvert\int^{\eta_{i+1}}_{\eta_i} G(\tau,s)d\tau \Big\rvert^pds\bigg)^{\frac{1}{p}}\\
&&+
\frac{1}{\lvert \mu\rvert}\sum_{i=1}^{m}\lvert\beta_i\rvert\bigg(\int_0^1\lvert G(\eta_i,s)\rvert^pds\bigg)^{\frac{1}{p}}\bigg]\sigma(u_1,u_2),
\end{eqnarray*}
then
\begin{equation*}
\lvert (Au_1)(t) - (Au_2)(t)\rvert \leq L\Phi\sigma(u_1,u_2).
\end{equation*}
Thus
\begin{equation*}
d(Au_1, Au_2) \leq \xi\sigma(u_1,u_2),
\end{equation*}
where $\xi = L\Phi>0.$

On the other hand, for all $u_{1},u_{2}\in X$, we have
\[d(Au_1, Au_2) \leq \xi\sigma(u_1,u_2)\leq \xi d(u_1, u_2) .\]
Then, for any $\varepsilon$ we can choose $\delta=\varepsilon \xi^{-1}$ so that $d(Au_1, Au_2)<\varepsilon$, whenever $d(u_1, u_2)<\delta$, which means that $A$ is continuous on $X$ with respect to the metric $d$.
Further, for all $u_{1},u_{2}\in X$, we get
\begin{equation*}
\int_0^1\lvert (Au_1)(t) - (Au_2)(t)\rvert^qdt \leq (L\Phi \sigma(u_1,u_2))^q,
\end{equation*}
and hence
\begin{equation*}
\sigma(Au_1, Au_2) \leq \nu\sigma(u_1,u_2),
\end{equation*}
where $\nu = L\Phi, 0<\nu<1.$\\
Thus, $A$ is contractive on $X$ with respect to the metric $\sigma$. Consequently, by Theorem \ref{Thm Rus}, $A$ has a unique fixed point in $X$ which is the unique (nontrivial) solution of the boundary value problem \eqref{eq001}-\eqref{eq002}.
\end{proof}

\section{Examples} In this section, we give two examples illustrating the usefulness of our theoretical
results.

\begin{exmp} \label{exp1}
We consider the following boundary value problem

\begin{equation}\label{exmple1/3eq001}
 \begin{cases} u^{(6)}(t) + t+10^{3}\arctan u= 0,\ t\in(0,1),\\
u'(0)= u''(0)= u^{(3)}(0)= u^{(4)}(0)= u''(1)=0,\\
u(0)= \int^{\frac{1}{3}}_{\frac{1}{4}} u(s)ds + 3u(\frac{1}{4})+4u(\frac{1}{3}),
\end{cases}
\end{equation}
where
$ f(t,u) = t+10^{3}\arctan u \in C(J\times\mathbb{R}, \mathbb{R})$ and $\alpha = 1, \beta_1=3, \beta_2= 4, \eta_1= \frac{1}{4}, \eta_2= \frac{1}{3}$ such that $\mu = -\frac{73}{12}.$\\
For all $u_1, u_2\in \mathbb{R}$, and $t\in J$, we have
\begin{eqnarray*}
\lvert f(t,u_1) - f(t,u_2)\rvert &=& \lvert 10^{3}(\arctan u_1 - \arctan u_2)\rvert\\
&&\leq L\lvert u_1 - u_2\rvert,
\end{eqnarray*}
 where $L = 10^{3}$.

On the other hand, by choosing $p=2$, we obtain
\begin{eqnarray*}
\Phi&=& \bigg(\int_0^1\lvert G(1,s)\rvert^2ds\bigg)^{\frac{1}{2}} + \bigg|\frac{\alpha}{\mu}\bigg| \bigg(\int_0^1\lvert K(\eta_{2},s) - K(\eta_1,s)\rvert^2ds\bigg)^{\frac{1}{2}}\\
&&+
\frac{1}{\lvert \mu\rvert}\sum_{i=1}^{2}\lvert\beta_i\rvert\bigg(\int_0^1\lvert G(\eta_i,s)\rvert^2ds\bigg)^{\frac{1}{2}}\\
&=& \bigg(\int_0^1\lvert G(1,s)\rvert^2ds\bigg)^{\frac{1}{2}} + \bigg|\frac{\alpha}{\mu}\bigg| \bigg(\int_0^{\eta_1}\lvert K(\eta_{2},s) - K(\eta_1,s)\rvert^2ds+ \int_{\eta_1}^{\eta_2}\lvert K(\eta_{2},s) - K(\eta_1,s)\rvert^2ds\\
&&+
\int_{\eta_2}^{1}\lvert K(\eta_{2},s) - K(\eta_1,s)\rvert^2ds\Big)^{\frac{1}{2}}+\frac{1}{\lvert \mu\rvert}\bigg[\lvert\beta_1\rvert\bigg(\int_0^1\lvert G(\eta_1,s)\rvert^2ds\bigg)^{\frac{1}{2}} + \lvert\beta_2\rvert\bigg(\int_0^1\lvert G(\eta_2,s)\rvert^2ds\bigg)^{\frac{1}{2}}\bigg]\\
&=& \bigg(\int_0^1\lvert G(1,s)\rvert^2ds\bigg)^{\frac{1}{2}} + \frac{12}{73}\bigg(\int_0^{\frac{1}{4}}\Big\lvert K(\frac{1}{3},s) - K(\frac{1}{4},s)\Big\rvert^2ds+ \int_{\frac{1}{4}}^{\frac{1}{3}}\Big\lvert K(\frac{1}{3},s) - K(\frac{1}{4},s)\Big\rvert^2ds\\
&&+
\int_{\frac{1}{3}}^{1}\Big\lvert K(\frac{1}{3},s) - K(\frac{1}{4},s)\Big\rvert^2ds\bigg)^{\frac{1}{2}}+
\frac{12}{73}\bigg[3\bigg(\int_0^1\Big\lvert G(\frac{1}{4},s)\Big\rvert^2ds\bigg)^{\frac{1}{2}} +4\bigg(\int_0^1\Big\lvert G(\frac{1}{3},s)\Big\rvert^2ds\bigg)^{\frac{1}{2}}\bigg]\\
&\approx& 0,000902884.
\end{eqnarray*}
Therefore, $L\Phi <1$.\\
Hence, based on Theorem \ref{thm2}, the boundary value problem \eqref{exmple1/3eq001} has a unique solution.
\end{exmp}
\begin{exmp} \label{exp2}
Consider the following boundary value problem

\begin{equation}\label{exmple2/3eq002}
\begin{cases} u^{(6)}(t) + \frac{10t(1+150u^3+\sin u)e^{-t^2}}{1+2u^2}= 0,\ t\in(0,1),\\
u'(0)= u''(0)= u^{(3)}(0)= u^{(4)}(0)= u''(1)=0,\\
u(0)= \int^{\frac{2}{3}}_{\frac{1}{2}} u(s)ds +2\int^{\frac{4}{5}}_{\frac{2}{3}} u(s)ds +\frac{1}{3}u(\frac{1}{2})+\frac{2}{5}u(\frac{2}{3})+ \frac{1}{4}u(\frac{4}{5}),
\end{cases}
\end{equation}
where $f(t,u(t)) = p(t)g(u(t))$ with $p(t)=10te^{-t^2} \in X$ and $g(u) = \frac{1+150u^3+\sin u}{1+2u^2}\in C(\mathbb{R},\mathbb{R})$. $\alpha_1=1, \alpha_2=2, \beta_1=\frac{1}{3}, \beta_2=\frac{2}{5},\beta_3=\frac{1}{4},\eta_1=\frac{1}{2},\eta_2=\frac{2}{3}$ and $\eta_3=\frac{4}{5}$.

We have $\mu=-\frac{5}{12}$, $\gamma=\lim_{u\rightarrow \infty} \frac{g(u)}{u}=75,\  p^{*}= \max_{t\in J}\lvert p(t)\rvert=5\sqrt{\frac{2}{e}}$ and $p(t_0)g(0)=10t_0e^{-t_0^2}\neq 0,$\ for some $t_0\in(0,1].$\\
By simple calculation, we find
\begin{eqnarray*}
M &=& p^{*}\Big(1+\frac{1}{\lvert \mu\rvert}\sum_{i=1}^{m-1}\lvert\alpha_i\rvert(\eta_{i+1}-\eta_i)+\frac{1}{\lvert \mu\rvert}\sum_{i=1}^{m}\lvert\beta_i\rvert\Big)\int_0^1G(1,s)ds\\
&=& 22\sqrt{\frac{2}{e}}\int_0^1G(1,s)ds\\
&=& \frac{11}{720}\sqrt{\frac{2}{e}}.
\end{eqnarray*}
Hence $\lvert \gamma\rvert < M^{-1}$. Since all hypotheses of Theorem \ref{thm1} are verified, therefore the boundary value problem \eqref{exmple2/3eq002} has at least one nontrivial solution.
\end{exmp}

\end{document}